\renewcommand{\leq}{\leqslant}
\renewcommand{\geq}{\geqslant}
\newcommand{\rr}{\mathbf{\mathbb{R}}}
\newcommand{\dist}{\operatorname{dist}}
\newtheorem{twr}{Theorem}
\newtheorem{defi}{Definition}
\begin{document}

\title{Some estimations of the \L ojasiewicz exponent for polynomial mappings on semialgebraic sets}
\author{Kacper Grzelakowski}
\maketitle

\begin{abstract} 
We strengthen some estimations of the local and global \L ojasiewicz exponent for polynomial mappings on closed semialgebraic sets obtained by K.Kurdyka, S.Spodzieja and A.Szlachci\'nska in \cite{KurSpo1}.
\end{abstract}

\section*{Introduction}
\L ojasiewicz inequalities are important tools in many different areas of mathematics such as singularity theory, differential analysis or dynamical systems (for example \cite{Brz}, \cite{KurSpo3}, \cite{Nied1}). They first appeared in works of H\"ormander in 1958  \cite{Hor1} and independently in those of \L ojasiewicz in 1958  \cite{Loj1} and 1959 \cite{Loj2}.They were used to prove Schwartz hypothesis that a division of a distibution by a polynomial  \cite{Hor1} and by real analytic function \cite{Loj1} \cite{Loj2} is always possible. Estimates of the {\L}ojasiewicz exponent are nowadays widely used in real and complex algebraic geometry. K.~Kudryka, S.~Spodzieja and A.~Szlachci\'nska in \cite{KurSpo2} have given an estimate of the {\L}ojasiewicz exponent at a point for a continuous semialgebraic mapping on a closed semialgebraic set and an estimate of the {\L}ojasiewicz exponent at infinity for a polynomial mapping on a semialgebraic set. In this paper we show that in case of a polynomial mapping, be it at a point or at infinity, it is possible to obtain slightly stronger results than they have.
\

\emph{\bf\it Keywords:} {\L}ojasiewicz exponent, semialgebraic set, semialgebraic mapping, polynomial mapping.
\section*{\L ojasiewicz Exponent at a point}

Let $X\subset \rr^N$ be a closed semialgebraic set and let $F:X\to\rr^m$ be a  polynomial mapping, such that $0\in X$ and $F(0)=0$.
 Then, there exist positive constants $C, \eta, \varepsilon$ such that the following \L ojasiewicz inequality holds (see \cite{Loj1}):
\begin{equation}\label{e1.1}
|F(x)|\ge C\dist(x,F^{-1}(0)\cap X)^\eta  \quad \text{for} \quad    x\in X, |x|<\varepsilon
\end{equation} 
where $|\cdot|$ is the Euclidean norm and $\dist(x,A)$ is the distance of a point $x$ to the set $A$, i.e. the lower bound of $|x-a|$ for $a\in A$. By convention $\dist (x,\emptyset)=1$.
\begin{defi}
The infimum of the exponents $\eta$ in \eqref{e1.1} is called the \L ojasiewicz exponent of F on the set X at 0 and is denoted by $\mathcal{L}_0(F|X)$.
\end{defi}
Each closed semialgebraic set $X\subset \rr^N$ has a decomposition
\begin{equation*}
 X=X_1\cup\dots\cup X_k 
\end{equation*} 
into the union of closed basic semialgebraic sets
\begin{equation*}
 X_i=\{x\in \rr^N:g_{i,1}(x)\ge0,\dots,g_{i,r_i}(x)\ge0, h_{i,1}(x)=\dots=h_{i,l_i}(x)=0\},
\end{equation*} 
$i = 1,\dots, k $ where $g_{i,1},\dots g_{i,r_i},h_{i,1},\dots,h_{i,l_i} \in\rr[x_1,\dots,x_N]$ (see \cite{BCR}). Assume that $r_i$ is the smallest possible number of inequalities $ g_{i,j}(x)>0$ in the definition of $X_i$ for $i=1,\dots,k$.  Denote by $r(X)$ the minimum of $\max\{r_1,\dots,r_k\}$ over all decompositions into unions of sets of X.  Obviously r(X) = 0 means that $X$ is an algebraic set.  Denote by $\kappa(X)$ the minimum of the numbers
 \begin{equation*}
\max\{\deg g_{1,1},\dots,\deg g_{k,r_k},\deg h_{1,1},\dots,\deg h_{k,l_k}\}
\end{equation*} 
over all decompositions of X into the union of sets, provided $r_i \le r(X)$. By $\deg F$ we mean the maximum of the degrees of the components of the mapping F. 

First aim of this paper is to prove the following theorem:
\begin{twr} 
Let $X\subset \rr^N$ be a closed semialgebraic set such that $0\in X$ and let $F:X\to\rr^m$ be a nonzero polynomial mapping such that $F(0)=0$. Set $r=r(X)$ and $d=\max\{\kappa(X),\deg F\}$. Then:
\begin{equation}\label{eqmain1}
\mathcal{L}_0(F|X)  \le d(6d-3)^{N+r+m-1}.
\end{equation}
\end{twr}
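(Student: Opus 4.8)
The plan is to reduce the bound on the Łojasiewicz exponent to a problem about the degree of a single polynomial mapping on an algebraic set, and then invoke an existing effective estimate for that case. The standard device here is the \emph{graph trick}: rather than working directly with the semialgebraic inequalities defining $X$, I would encode each inequality $g_{i,j}(x)\ge 0$ by introducing a slack variable and the equation $g_{i,j}(x)=z_{i,j}^2$. Concretely, for each basic piece $X_i=\{g_{i,1}\ge 0,\dots,g_{i,r_i}\ge 0,\ h_{i,1}=\dots=h_{i,l_i}=0\}$, the set $X_i$ is the image under the projection $\pi:\rr^{N}\times\rr^{r_i}\to\rr^{N}$ of the algebraic set
\begin{equation*}
\widetilde{X}_i=\{(x,z)\in\rr^{N+r_i}: g_{i,j}(x)=z_j^2,\ j=1,\dots,r_i,\ h_{i,s}(x)=0,\ s=1,\dots,l_i\}.
\end{equation*}
The composed mapping $\widetilde F=F\circ\pi$ has the same degree as $F$, and $\dist(x,F^{-1}(0)\cap X)$ is comparable to $\dist((x,z),\widetilde F^{-1}(0)\cap\widetilde X_i)$ because $|z_j|=\sqrt{g_{i,j}(x)}$ is controlled by $x$ near $0$. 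The upshot is that I would bound $\mathcal L_0(F|X)$ by the maximum over $i$ of the Łojasiewicz exponent $\mathcal L_0(\widetilde F|\widetilde X_i)$ on an \emph{algebraic} set in $\rr^{N+r_i}$.

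Next I would need an effective bound for the Łojasiewicz exponent of a polynomial mapping on an algebraic set. Writing out the $z_j^2$ equations adds $r_i$ variables and raises the number of defining equations, but crucially it turns everything into equalities, so $r(\widetilde X_i)=0$. The degree of the defining polynomials of $\widetilde X_i$ is $\max\{\deg g_{i,j},\deg h_{i,s},2\}$, which is bounded by $d$ once we absorb the harmless factor $2$ (this is where the slightly improved constant over \cite{KurSpo2} is expected to come from — keeping the quadratic slack terms rather than using a cruder encoding). On the algebraic set $\widetilde X_i\subset\rr^{N+r_i}$ the mapping $\widetilde F$ takes values in $\rr^m$, so the ambient data are $(N+r_i)$ variables, the target dimension $m$, and degree bound $d$. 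Applying the known algebraic-case estimate of the form $\mathcal L_0 \le d(6d-3)^{(\text{number of variables})+m-1}$ (the case $r=0$ of the theorem, which is the base result I would treat as available) gives $\mathcal L_0(\widetilde F|\widetilde X_i)\le d(6d-3)^{N+r_i+m-1}$, and taking the maximum over $i$ together with $r_i\le r$ yields exactly \eqref{eqmain1}.

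The main obstacle is the comparison of distances under the projection $\pi$. Passing from $\dist((x,z),\widetilde F^{-1}(0)\cap\widetilde X_i)$ back to $\dist(x,F^{-1}(0)\cap X_i)$ is straightforward since projection is $1$-Lipschitz, but the reverse inequality — showing that a point $x\in X_i$ close to the zero set of $F$ lifts to a point $(x,z)\in\widetilde X_i$ close to $\widetilde F^{-1}(0)\cap\widetilde X_i$ with comparable distance — is delicate, because the lift $z_j=\sqrt{g_{i,j}(x)}$ need not vary Lipschitz-continuously where the $g_{i,j}$ vanish. I would handle this by a curve-selection argument: along any analytic arc realizing the Łojasiewicz exponent on $X_i$, the functions $g_{i,j}$ have well-defined fractional orders of vanishing, so the lifted arc $z_j(t)=\sqrt{g_{i,j}(\gamma(t))}$ is again analytic in a fractional power of $t$, and comparing leading exponents shows the distance estimates match up to a bounded factor that does not affect the infimal exponent. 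The second point requiring care is confirming that the additive constant $2$ from the $z_j^2$ terms can be absorbed into $d$ without degrading the exponent in the target slot $N+r+m-1$; this hinges on the slack variables not counting toward the degree once $d\ge 2$, which holds because $\deg F\ge 1$ for a nonzero $F$ with $F(0)=0$ forces $d\ge 1$, and the quadratic terms contribute degree exactly $2\le 2d$ in a way that the base estimate tolerates.
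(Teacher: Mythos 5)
Your approach is genuinely different from the paper's. The paper never reduces to a single algebraic set: it encodes $X\times\{0\}$ and the graph $Y=\{(x,z):z=F(x)\}$ as \emph{two} algebraic sets $A,B\subset\rr^{N+m+r}$ sharing one set of slack variables, and then applies the Kurdyka--Spodzieja separation estimate (KS1) to the pair $(A,B)$; the $m$ in the exponent is exactly the cost of carrying the graph coordinates $z$. You instead discard the graph, lift each basic piece $X_i$ to an algebraic set $\widetilde{X}_i\subset\rr^{N+r_i}$, and invoke a known bound for polynomial mappings on algebraic sets. That reduction is sound, and quoting the algebraic case as available ($R=0$ in \cite[Corollary 2.2]{KurSpo2}, noting that the graph of a polynomial mapping on an algebraic set is algebraic) is legitimate and non-circular; it gives $d(6d-3)^{(N+r_i)+m-1}$ and hence the theorem. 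In fact, executed carefully, your route proves \emph{more} than the theorem: apply (KS2) of \cite{KurSpo1}, centered at the lift of the origin, to the single polynomial mapping $\Phi(x,z)=\bigl(F(x),\,g_{i,1}(x)-z_1^2,\dots,g_{i,r_i}(x)-z_{r_i}^2,\,h_{i,1}(x),\dots,h_{i,l_i}(x)\bigr)$ on $\rr^{N+r_i}$; since $|\Phi(x,z)|=|F(x)|$ on $\widetilde{X}_i$ and $\Phi^{-1}(0)$ projects onto $F^{-1}(0)\cap X_i$, one gets $\mathcal{L}_0(F|X)\le d(6d-3)^{N+r-1}$, with no $m$ at all.

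The genuine flaw in your write-up is the paragraph on the ``main obstacle'': you have misidentified which distance inequality is needed, and the one you propose to prove is false as stated. To push the \L ojasiewicz inequality from $\widetilde{X}_i$ down to $X_i$ you only need $\dist\bigl((x,z),\widetilde{F}^{-1}(0)\cap\widetilde{X}_i\bigr)\ge\dist\bigl(x,F^{-1}(0)\cap X_i\bigr)$, which is exactly the trivial $1$-Lipschitz direction (because $\pi(\widetilde{F}^{-1}(0)\cap\widetilde{X}_i)=F^{-1}(0)\cap X_i$); continuity of the lift $z_j(x)=\sqrt{g_{i,j}(x)}$ is all that is required to stay inside the neighbourhood where the upstairs inequality holds. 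The ``reverse'' inequality is never used in this direction of the bound, and your curve-selection claim that the two distances ``match up to a bounded factor'' is untrue: for $X=\{x\ge 0\}\subset\rr$ and $g(x)=x$, the lift of $x$ is $(x,\sqrt{x})$, whose distance to the lift of the origin is of order $\sqrt{x}$, not $x$ --- the comparison is only H\"older of exponent $1/2$, and no arc argument makes it Lipschitz. Delete that paragraph and your proof closes. Finally, the case $d=1$ still needs the paper's separate (trivial) treatment, since then the slack equations have degree $2>d$; your remark that ``$2\le 2d$'' does not repair that bit of bookkeeping.
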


In \cite[Corollary 2.2]{KurSpo2} Kurdyka, Spodzieja and Szlachci\'nska proved that:
$$
\mathcal{L}_0(F|X)  \le d(6d-3)^{N+R+m-1}
$$
with $R=r(X)+r(graphF)$. Actually in \cite{KurSpo2} there is no $m$ in the inequality but this should be considered a typographical error. Thus in our theorem we do improve their estimation by using $r=r(X) $ instead of  $R=r(X)+r(graphF)$. For this paper to be self-contained and more clear we will have to repeat some of the argumentation from \cite{KurSpo2} for polynomial mappings on semialgebraic sets. 

 In the proof of the Theorem 1 we will use the result obtained in \cite[Corollary 8]{KurSpo1} regarding \L ojasiewicz exponent in the case of two algebraic sets. Let  $X$ and $Y$ be algebraic subsets of $\rr^M$ described by polynomials of degree not greater than $d$. Let $a\in\rr^M$. Then there exists a positive constant $C$ such that:
 \begin{equation}\tag{KS1}\label{KS1}
\dist(x,X)+\dist(x,Y)\ge C\dist(x,X\cap Y)^{d(6d-3)^{M-1}}
\end{equation}
in a neighbourhood $U\subset \rr^M$ of $a$. We will also use another result from \cite[Corollary 6]{KurSpo1}. For a real polynomial mapping $F:\rr^N \to \rr^m$ such that $d=\deg F$ we have
 \begin{equation}\tag{KS2}\label{KS2}
\mathcal{L}_0(F)  \le d(6d-3)^{M-1}.
\end{equation}

\begin{proof}[Proof of Theorem 1]If $d=1$ then the statement is obvious. Let us assume that $d\geq 2$. 
It suffices to consider the case when $X$ is a basic semialgebraic set. The set X was originally in $\rr^N$ but since we will operate in $\rr^N\times\rr^m$ we need the set $X\times\{0\}\subset\rr^N\times\rr^m.$ Not to overuse the notation from now on we will use $X$ to denote $X\times\{0\}\subset\rr^N\times\rr^m.$ So let:
\begin{multline}\label{defX}
X:=\{(x,z)\in \rr^N\times \rr^{m}:g_{1}(x)\ge0,\dots,g_{r(X)}(x)\ge0, \\ h_{1}(x)=\dots=h_{l}(x)=0, z=0\},
\end{multline}
\begin{multline*}
Y:=\{(x,z)\in \rr^N\times \rr^{m}:g_{1}(x)\ge0,\dots,g_{r(X)}(x)\ge0, \\ h_{1}(x)=\dots=h_{l}(x)=0, z=F(x)\}.
\end{multline*} 
Now, let us define a mapping $G:\rr^{N}\times \rr^{r}\to\rr^{r}$ by:
\begin{equation*} 
G(x,y_1,\dots, y_{r}):=\{g_{1}(x)-y^2_1, \dots,  g_{r}(x)-y_{r}^2\}, 
\end{equation*}
and then sets:
\begin{multline*}
A:=\{(x,z,y_1,\dots, y_{r})\in \rr^{N}\times\rr^{m}\times\rr^{r}: \\ G(x,y)=0, h_{1}(x)=\dots=h_{l}(x)=0, z=0\},
\end{multline*}
 \begin{multline*}
B:=\{(x,z,y_1,\dots, y_{r})\in \rr^{N}\times\rr^{m}\times\rr^{r}: \\ G(x,y)=0, h_{1}(x)=\dots=h_{l}(x)=0, z=F(X)\}.
\end{multline*}
Then A and B are algebraic sets and $\pi(A)=X, \pi(B)=Y$, where 
\begin{equation*}
\pi : \rr^{N+m}\times\rr^r \to  \rr^{N+m}, \quad \pi(x,z,y_1,\dots,y_r)=(x,z).
\end{equation*}
From the definitions of A and B we obtain:
\begin{equation}\label{p1.1}
\forall_{(x,0)\in X}\;\exists_{z\in\rr^m}\; \exists_{y\in\rr^r} (x,0,y)\in A \wedge (x,z,y)\in B.
\end{equation}
Since A and B are algebraic sets defined by polynomials of degree not greater than $d$ then by (\ref{KS1}), for sets $A, B$ there exists a positive constant $C$ such that:
 \begin{equation}\label{p1.4}
\dist((x,0,y),A)+\dist((x,0,y),B)\ge C\dist((x,0,y),A\cap B)^{d(6d-3)^{N+r+m-1}}
\end{equation}
in some neighbourhood $W$ of $0\in\rr^{N+m+r}$. For any $(x,z,y)\in\rr^{N+m+r}$ we have:
\begin{equation}\label{p1.4.1}
\dist((x,z,y),A\cap B)\ge \dist((x,z),X\cap Y) .
\end{equation}

We can now assume that $g_{i,j}(0)=0$ for any $i,j$. Indeed, if $g_{i,j}(0)<0$ for some $i,j$ then $0\notin X \cap Y$ which contradicts the assumption.If $g_{i,j}(0)>0$ for some $i,j$ then it is safe to omit this inequality in the definition of $X$ (and $Y$) and the germ of 0 at $X$ or $Y$ will not change. If $g_{i,j}(0)>0$ for any $i,j$, then we can reduce our assertion to (\ref{KS2}). So, there exists a neighbourhood $V=V_1\times V_2 \subset W$ of $0\in \rr^{N+m+r}$ where $V_{1}\subset \rr^{N+m}$ and $V_2\subset \rr^r$ such that:
\begin{equation}\label{p1.5}
\forall_{(x,0,y)\in A:\; (x,0)\in \rr^{N+m},\; y\in\rr^r} \quad (x,0)\in X\cap V_{1}\Rightarrow y\in V_2,
\end{equation}
and
\begin{equation}\label{p1.6}
\forall_{(x,z,y)\in B:\; (x,z)\in \rr^{N+m},\; y\in\rr^r} \quad (x,z)\in Y\cap V_{1}\Rightarrow y\in V_2.
\end{equation}

Note, that since $A$ and $B$ were defined by $N+r$ identical coordinates $x,y$ and differ only in $m$ of them $z$.
This explains why in (\ref{p1.5}) and in (\ref{p1.6}) we were able to consider the same neighbourhood $V_2\subset\rr^r$. 

Since F is a continuous mapping there exist neighbourhoods $U_1 \subset \rr^N$ and $U_2 \subset \rr^m$ of the origin such that $U_1 \times U_2 \subset V_1 $ and for every $x \in U_1$ we have  $z=F(x) \in U_2$. Then $(U_1 \times U_2) \times V_2 \subset W$. Consider some $x\in U_1$. By \eqref{p1.1} there exist $z\in \rr^m$ and $y\in \rr^r$ such that $(x,0,y)\in A$ and $(x,z,y)\in B$. Then, by (\ref{p1.5}) and (\ref{p1.6}) we see that $(x,0,y)\in V$.  
Let us observe that:
\begin{equation*}
|F(x)|=|(x,0)-(x,z)|=|(x,0,y)-(x,z,y)| \geq \dist ((x,0,y),B).
\end{equation*}
Since $(x,z,y)\in B$, and $(x,0,y)\in A$ then, from the above: 
\begin{equation*}
|F(x)|\ge \dist((x,0,y),A) + \dist((x,0,y), B).
\end{equation*}
 Since $A,B\in \rr^{N+m+r}$, by (\ref{p1.4}) and by (\ref{p1.4.1}), we obtain :
\begin{multline*}
|F(x)| \ge \dist((x,0,y),A)+ \dist((x,0,y), B) \\  \ge C\dist((x,0,y),A\cap B)^{d(6d-3)^{N+r+m-1}} \\ \ge C\dist((x,0),X\cap Y)^{d(6d-3)^{N+r+m-1}}.
\end{multline*}
Since $X\cap Y = (F^{-1}(0)\times{0})$  we obtain the assertion.
\end{proof}

\section*{The \L ojasiewicz Exponent at infinity}
The second result of this paper concerns the global {\L}ojasiewicz inequality and the \L ojasiewicz exponent  of a polynomial mapping at infinity.
\begin{defi}
Assume that a closed semialgebraic set $X\subset \rr^N$ is unbounded. By the \L ojasiewicz exponent at infinity of a polynomial mapping $F:X \to \rr^m$ we mean the supremum of the exponents $\eta$ in the following inequality:
$$
|F(x)|\geq C|x|^{\eta} \quad for \quad x\in X, |x|\geq c 
$$
for some positive constants $C, c$. We denote it by $\mathcal{L}_{\infty}(F|X).$ In case $X=\rr^N$ we call this exponent the \L ojasiewicz exponent at infinity and denote it by $\mathcal{L}_{\infty}(F).$
\end{defi}
In \cite[Corollary 10]{KurSpo1} it is proved, that for a polynomial mapping $F=(f_1,\dots,f_m):\rr^N\to\rr^m$ of degree $d$ of a real algebraic set $X$ we have:
\begin{equation}\tag{KS3}\label{KS3}
|F(x)| \geq C\Bigg(\frac{\dist(x,F^{-1}(0)\cap X)}{1+|x|^2}\Bigg)^{d(6d-3)^{M-1}}\quad \text{for} \  x\in \rr^M.
\end{equation}
Using this, in \cite[Corollary 3.4]{KurSpo2} it has been shown that for a polynomial mapping $F$ on a closed semialgebraic set $X$ the following inequality holds:
\begin{equation*}
|F(x)| \geq C\Bigg(\frac{\dist(x,F^{-1}(0)\cap X)}{1+|x|^2}\Bigg)^{d(6d-3)^{N+R-1}} \quad \text{for} \  x\in \rr^N,
\end{equation*}
where $R=2r(X)$. We are again going to show that this estimate can be improved by substituting $R$ with $r=r(X)$ and also by adding $m$.
\begin{twr}\label{t1.2} 
Let $F:X\rightarrow \rr^m$ be a polynomial mapping, where $X\subset\rr^N$ is a closed semialgebraic set. If $D=\max\{2,\kappa(X)\}, d=\max\{\deg F,D\}$ and $ r=r(X)$ then:
\begin{equation} \label{p2.1}
|F(x)| \geq C\Bigg(\frac{\dist(x,F^{-1}(0)\cap X)}{1+|x|^D}\Bigg)^{d(6d-3)^{N+r+m-1}} \quad  \text{\rm for} \  x\in \rr^M.
\end{equation}
If additionally $X$ is unbounded set and $ F^{-1}(0)\cap X$ is a compact set, then:
\begin{equation} \label{p2.2}
\mathcal{L}_{\infty}(F|X)\geq - \frac{D}{2} d(6d-3)^{N+r+m-1}.
\end{equation}
\end{twr}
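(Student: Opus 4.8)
The plan is to imitate the proof of Theorem 1, again reducing to a basic semialgebraic set and linearising the defining inequalities by squared variables, but now invoking the global estimate \eqref{KS3} in place of \eqref{KS1}. As before it is enough to treat one basic piece, say $X=\{x\in\rr^{N}:g_1(x)\ge0,\dots,g_r(x)\ge0,\ h_1(x)=\dots=h_l(x)=0\}$ with $r=r(X)$ and $\deg g_j,\deg h_i\le\kappa(X)\le D$; a finite union $X=X_1\cup\dots\cup X_k$ is assembled at the end from $F^{-1}(0)\cap X=\bigcup_i\big(F^{-1}(0)\cap X_i\big)$, which gives $\dist(x,F^{-1}(0)\cap X)\le\dist(x,F^{-1}(0)\cap X_i)$, while each piece uses only $r_i\le r$ squared variables and hence an exponent not larger than $d(6d-3)^{N+r+m-1}$.

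First I would embed $X$ into $\rr^{N}\times\rr^{m}$ as $X\times\{0\}$ (the $m$ dummy coordinates $z$ account for the summand $m$ in the exponent, exactly as in Theorem 1) and pass to the algebraic set
\[ A:=\{(x,z,y)\in\rr^{N}\times\rr^{m}\times\rr^{r}:g_j(x)-y_j^2=0,\ h_i(x)=0,\ z=0\}, \]
together with the polynomial mapping $Q:\rr^{N+m+r}\to\rr^{m}$, $Q(x,z,y)=F(x)$. Since $A$ and the components of $Q$ are given by polynomials of degree at most $d=\max\{\deg F,D\}$, the estimate \eqref{KS3} applies in dimension $M=N+m+r$ and produces a constant $C>0$ with
\[ |Q(w)|\ge C\Big(\tfrac{\dist(w,S)}{1+|w|^2}\Big)^{d(6d-3)^{N+r+m-1}},\qquad w=(x,z,y), \]
where $S:=Q^{-1}(0)\cap A$; note that the projection $\pi(x,z,y)=x$ sends $S$ onto $F^{-1}(0)\cap X$.

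To get \eqref{p2.1} I would restrict this to $w=(x,0,y)\in A$ with $y_j=\sqrt{g_j(x)}$, $x\in X$. Then $|Q(w)|=|F(x)|$, and since $\pi$ is $1$-Lipschitz we have $\dist(w,S)\ge\dist(x,F^{-1}(0)\cap X)$. Moreover $|y|^2=\sum_j g_j(x)\le c\,|x|^{D}$ for large $|x|$ (each $g_j$ is nonnegative on $X$ and of degree $\le D$), so $1+|w|^2=1+|x|^2+|y|^2\le c'(1+|x|^{D})$; inserting the last two inequalities into the displayed estimate yields \eqref{p2.1} for $x\in X$.

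The hard part is the second assertion, because it does \emph{not} follow from \eqref{p2.1}: with $F^{-1}(0)\cap X$ compact one has $\dist(x,F^{-1}(0)\cap X)\sim|x|$ at infinity, and feeding this into \eqref{p2.1} only produces the exponent $1-D$, which is worse than $-D/2$ when $D>2$. The gain must instead be extracted in the lifted space. Assuming $F^{-1}(0)\cap X$ compact, the set $S$ is compact as well (its $x$-projection is compact and each $y_j=\pm\sqrt{g_j(x)}$ stays bounded there), so for $w=(x,0,y)\in A$ with $|x|$ large one has $\dist(w,S)\ge\tfrac12|w|$ and $1+|w|^2\le 2|w|^2$, whence $|F(x)|=|Q(w)|\ge C'\,|w|^{-d(6d-3)^{N+r+m-1}}$. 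Since $D\ge2$ gives $|w|\le c''|x|^{D/2}$ (again from $|y|\le c\,|x|^{D/2}$), this becomes $|F(x)|\ge C''\,|x|^{-\frac{D}{2}d(6d-3)^{N+r+m-1}}$ for $x\in X$ with $|x|$ large, which is \eqref{p2.2}. The two points that need care are exactly this one — measuring the distance to the \emph{lifted} compact set $S$ in $\rr^{N+m+r}$, where the numerator is of order $|w|\sim|x|^{D/2}$ rather than $|x|$, and then returning through the sharp bound $|w|\le c''|x|^{D/2}$ — and the degree bookkeeping, namely that replacing $g_j(x)\ge0$ by $g_j(x)-y_j^2=0$ keeps every defining degree $\le d$ and so does not enlarge the base $6d-3$.
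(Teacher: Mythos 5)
Your argument is, in substance, the paper's own proof: the same lift of a basic piece of $X$ into $\rr^{N+m+r}$ via dummy coordinates $z$ and square-root coordinates $y_j$, the same appeal to the global \L ojasiewicz inequality of \cite[Corollary 10]{KurSpo1} in dimension $N+m+r$, and the same transfer back through $|y|^2\le c|x|^{D}$, hence $|(x,0,y)|\le c''|x|^{D/2}$, which is where the exponent $\frac{D}{2}$ comes from. The only organizational difference in the first assertion is that the paper bundles $F$, $G$ and the $h_i$ into a single mapping $H$ and measures distance to $Z=H^{-1}(0)$, while you keep $A$ as an ambient algebraic set and measure distance to $S=Q^{-1}(0)\cap A$; these are the same set and the same estimate (your version is in fact cleaner, since the paper's component ``$F(x,z)$'' of $H$ is never defined precisely). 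Write $e=d(6d-3)^{N+r+m-1}$. In the second assertion you do depart slightly from the paper: it cites \cite[Corollary 11]{KurSpo1} (namely $\LL(H)\ge -e$ when $H^{-1}(0)$ is compact) applied to the lifted mapping, whereas you rederive that bound from Corollary 10 via $\dist(w,S)\ge\frac12|w|$ for large $|w|$. This inlining is legitimate --- it is exactly how Corollary 11 follows from Corollary 10 when the zero set is nonempty --- and your remark that \eqref{p2.2} does not follow from \eqref{p2.1} when $D>2$, so that the gain must be extracted upstairs, is correct and makes explicit a point the paper glosses over.

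The one genuine (though small) gap is the edge case $F^{-1}(0)\cap X=\emptyset$, which the hypotheses permit, since the empty set is compact. Then $S=\emptyset$, the step $\dist(w,S)\ge\frac12|w|$ is unavailable, and the paper's convention $\dist(\cdot,\emptyset)=1$ turns your chain of inequalities into $|F(x)|\ge C\bigl(1+|w|^2\bigr)^{-e}\ge C'|x|^{-De}$, i.e. only $\LL(F|X)\ge -De$, which misses the claimed $-\frac{D}{2}e$. The paper's proof does not have this problem at the formal level, because the statement of \cite[Corollary 11]{KurSpo1} covers compact --- hence possibly empty --- zero sets. So you should either restrict your inline argument to $S\ne\emptyset$ and fall back on Corollary 11 (as the paper does) when $S=\emptyset$, or give a separate treatment of that case.
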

\begin{proof}[Proof of Theorem 2]
Again we will repeat the argumentation from \cite{KurSpo2}.
Also, as in the previous proof we will consider the set $X\times\{0\}\subset\rr^N\times\rr^m$ defined by (\ref{defX}), and denote it simply by $X$ to avoid overuse of notation.
Let $H:\rr^{N+r+m}\to \rr^{r+m+l}$ be a polynomial mapping defined by:
\begin{equation*}
H(x,z,y)=(F(x,z),G(x,y),h_{1,1}(x),\dots,h_{1,l}(x)), \quad    x\in\rr^N, z\in\rr^m , y\in\rr^r
\end{equation*}
with $G$ being defined as in the previous proof. Then  $\deg H\leq d$. Let $V=F^{-1}(0) \cap X$ and $Z=H^{-1}(0)$. Obviously $Z$ is an algebraic set.  By (\ref{KS3}) for some positive constant $C$ we have:
\begin{equation*}
|H(x,z,y)|\geq C\Bigg(\frac{\dist((x,z,y),Z)}{1+|(x,0,y)|^2}\Bigg)^{d(6d-3)^{N+r+m-1}}  
\end{equation*}
for $ (x,0,y)\in\rr^N\times\rr^m\times\rr^r$. Obviously $\dist ((x,z,y),Z) \geq \dist((x,z),V)$ and thus:
\begin{equation} \label{p2.3}
|H(x,z,y)|\geq C\Bigg(\frac{\dist((x,z),V)}{1+|(x,0,y)|^2}\Bigg)^{d(6d-3)^{N+r+m-1}}
\end{equation}
 for $(x,0,y)\in\rr^N\times\rr^m\times\rr^r$. 
It is easy to observe that there exist constants $C_1 \geq 0,  R_1 \geq 1$ such that for $(x,0,y)\in A$ with $|(x,0,y)|\geq R_1$ we have $C_1|y|^2\leq|(x,0)|^D$. Since $D\geq 2$, for a constant $C_2 > 0$ we have $|(x,0,y)|\leq C_2|(x,0)|^{D/2}$ for $(x,0,y)\in A, |(x,0,y)|\geq R_1$. Hence, from (\ref{p2.3}) we obtain (\ref{p2.1}) for $(x,0)\in X$, $|(x,0)|>R_1$. Again, by diminishing $C$ if necessary we obtain (\ref{p2.1}) for all $(x,0)\in X$.

Now, let us prove the second assertion of Theorem 2. To do this, we will need yet another result from \cite{KurSpo1}, namely [Corollary 11]. The authors have shown that if $F=(f_1,\dots,f_m):\rr^N \to \rr^m$ is a polynomial mapping of degree $d\geq 1$, and $F^{-1}(0)$ is a compact set then:
\begin{equation}\tag{KS3} \label{KS3}
\mathcal{L}_{\infty}(F)  \geq -d(6d-3)^{n-1}.
\end{equation} 
Since $X$ is unbounded  we may assume that so is $A$. Since $V$ is compact, so is $H^{-1}(0)$. By (\ref{KS3}) we have $\mathcal{L}_{\infty}(H)  \geq -d(6d-3)^{N+r+m-1}$, in particular, for some constants $C,R > 0$,
\begin{equation*}
|H(x,0,y)|\geq C|(x,0,y)^{-d(6d-3)^{N+r+m-1}}\quad  \textrm{for }   (x,0,y)\in A, |(x,0,y)|\geq R   
\end{equation*}
Since $|(x,0,y)|\leq C_2|(x,0)|^{D/2}$ for $(x,0,y)\in A$, $|(x,0,y)|\geq R_1$, then for some constant $C_3>0$:
\begin{equation*}
|F(x,0)|=|H(x,0,y)|\geq C_3|x|^{-\frac{D}{2}d(6d-3)^{N+r+m-1}}\quad  \textrm{for } (x,0,y)\in A, |(x,0,y)|\geq R
\end{equation*}
and also $\mathcal{L}^{\rr}_{\infty}(F|X)  \geq -\frac{D}{2}d(6d-3)^{N+r+m-1}$, which ends the proof.
\end{proof}
\section*{Acknowledgement}
The author wishes to thank Stanis\l aw Spodzieja who has provided both inspiration and support in the process of writing this paper and Tadeusz Krasi\'nski whose insight has been more than helpful.

\end{document}